\renewcommand{\vec}[1]{\bf {#1}}
\newtheorem{theorem}{Theorem}
\newtheorem{lemma}[theorem]{Lemma}
\newtheorem{corollary}[theorem]{Corollary}
\newcommand{\Z}{{\mathbb{Z}}}
\newcommand{\N}{{\mathbb{N}}}
\newcommand{\Q}{{\mathbb{Q}}}
\newcommand{\Jarnik}{Jarn\'{\i}k}
\newcommand{\ellmin}{{\ell_{\text{\it min}}}}
\begin{document}

\title{The Irrationality Exponents of Computable Numbers}
\author{\begin{tabular}{ccc}
    Ver\'{o}nica Becher& Yann Bugeaud &Theodore A. Slaman
    \\
    {\small Universidad de Buenos Aires}&
    {\small Universit\'e de Strasbourg}&
    {\small University of California Berkeley}
    \\
    {\small vbecher@dc.uba.ar}&
    {\small bugeaud@math.unistra.fr}&
    {\small slaman@math.berkeley.edu}
  \end{tabular}
}
\date{August 21, 2014}

\maketitle

Mathematics Subjects Classification: {Primary: 11J04, Secondary: 03Dxx.}

Keywords: {Irrationality exponent, computability, Cantor set}.

\bigskip \bigskip

The irrationality exponent $a$ of a real number $x$ is the supremum of the set of real numbers~$z$
for which the inequality
\[
0< | x- p/q| < 1/q^z
\]
is satisfied by an infinite number of integer pairs $(p, q)$ with $q > 0$.  Rational numbers have
irrationality exponent equal to $1$, irrational numbers have it greater than or equal to $2$.  The
Thue--Siegel--Roth theorem states that the irrationality exponent of every irrational algebraic
number is equal to $2$.  Almost all real numbers (with respect to the Lebesgue measure) have
irrationality exponent equal to~$2$.  The Liouville numbers are precisely those numbers having
infinite irrationality exponent.

For any real number $a$ greater than or equal to $2$, \citet{Jar31} used the theory of continued
fractions to give an explicit construction, relative to~$a$, of a real number $x_a$ such that the
irrationality exponent of $x_a$ is equal to $a$.  For $a=2$, we can take $x_2 = \sqrt{2}$. For
$a>2$, we construct inductively the sequence of partial quotients of $x_a = [0; a_1, a_2, \ldots
]$. For $n \ge 1$, set $p_n/q_n = [0; a_1, a_2, \ldots , a_n]$.  Take $a_1 = 2$ and $a_{n + 1} =
\lfloor q_n^{a-2} \rfloor$, for $n \ge 1$, where $\lfloor \cdot \rfloor$ denotes the integer part
function. Then, the theory of continued fractions \citep[see][]{Schmidt:1980} directly gives that
the irrationality exponent of $x_a$ is equal to~$a$.

The theory of computability defines a computable function from non-negative integers to non-negative
integers as one which can be effectively calculated by some algorithm.  The definition extends to
functions from one countable set to another, by fixing enumerations of those sets.  A real number $x$ is
computable if there is a base and a computable function that gives the digit at each position of the
expansion of $x$ in that base.  Equivalently, a real number is computable if there is a computable
sequence of rational numbers $(r_j)_{j\geq 0}$ such that $|x -r_j| < 2^{-j}$ for~$j \ge 0$.

The construction cited above shows that for any computable real number~$a$ there is a computable
real number $x_a$ whose irrationality exponent is equal to $a$.  What of the inverse question?  Are
there computable numbers with non-computable irrationality exponents?  Theorem~\ref{1} gives a
characterization of the irrationality exponents of computable real numbers.


\begin{theorem}\label{1}
  A real number $a$ greater than or equal to~$2$ is the irrationality exponent of some computable
  real number if and only if  $a$ is the upper limit of a computable sequence of rational numbers.
\end{theorem}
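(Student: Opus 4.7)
The strategy is to handle the two implications separately. The forward direction is a direct computation from the definition of the irrationality exponent, while the converse requires adapting the \Jarnik{} continued-fraction construction recalled in the introduction to a computable sequence of approximations to~$a$.

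For the forward direction, let $x$ be a computable real with irrationality exponent $a \ge 2$; in particular, $x$ is irrational. For each integer $q\ge2$ let $p_q$ be the integer closest to $qx$, which is computable from $x$ and $q$, and set $\mu(q) = -\log|x-p_q/q|/\log q$. Since $x$ is irrational we have $|x-p_q/q|>0$, so $\mu(q)$ is a computable real and there is a computable rational $s_q$ with $|s_q-\mu(q)|<1/q$. Because for each $q$ the integer $p_q$ yields the best approximation to $x$ with denominator~$q$, the definition of the irrationality exponent rewrites as $a = \limsup_{q\to\infty}\mu(q)$, and therefore $\limsup_q s_q = a$.

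For the converse, suppose $(r_n)_{n\ge 1}$ is a computable sequence of rationals with $\limsup_n r_n = a$. We first smooth it into a tamer computable rational sequence by setting
\[
b_k \;=\; \max\!\Bigl(2,\;\max_{\lceil k/2\rceil \le i \le k} r_i\Bigr).
\]
Then $\limsup_k b_k = a$: for the upper bound, given $\varepsilon>0$ the inequality $r_i \le a+\varepsilon$ holds for all sufficiently large $i$, so $b_k \le a+\varepsilon$ as soon as $\lceil k/2\rceil$ passes this threshold; for the lower bound, if $r_{i_n}\to a$ along a subsequence then the choice $k_n = 2 i_n$ gives $b_{k_n}\ge r_{i_n}\to a$. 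We now define $x = [0;a_1,a_2,\dots]$ by $a_1 = 2$ and, for $k\ge 1$, $a_{k+1} = \max(1,\lfloor q_k^{b_k-2}\rfloor)$ where $p_k/q_k = [0;a_1,\dots,a_k]$. This makes $x$ computable. The standard continued-fraction estimate $1/(3 a_{k+1}q_k^{2}) \le |x-p_k/q_k| \le 1/(a_{k+1}q_k^{2})$ shows that the quality $-\log|x-p_k/q_k|/\log q_k$ of the $k$th convergent equals $2 + \log a_{k+1}/\log q_k + O(1/\log q_k) = b_k + O(1/\log q_k)$, where the error vanishes since $q_k\to\infty$. Since every rational with $|x-p/q|<1/(2q^2)$ is a convergent of~$x$, the irrationality exponent of $x$ coincides with the limsup of these qualities over convergents, which equals $\limsup_k b_k = a$.

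The main obstacle, and the conceptual heart of the converse, is the passage from the raw sequence $(r_n)$ to a sequence $(b_k)$ that drives the continued-fraction construction while preserving the limsup. A naive choice such as $b_k = r_k$ would fail: a single stage $k$ with $b_k$ far above $a$ yields a convergent whose quality is precisely that large value of $b_k$, and if such spikes recur infinitely often the irrationality exponent grows strictly beyond~$a$. The windowed-maximum definition exploits the asymptotic nature of $\limsup$: a spike $r_n>a+\varepsilon$ influences $b_k$ only for the finitely many $k$ with $n\le k\le 2n$ and is then forgotten, whereas any subsequence $r_{i_n}\to a$ is faithfully captured at indices $k_n = 2 i_n$, so $\limsup_k b_k = a$ as required.
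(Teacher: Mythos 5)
Your proposal is correct, and both halves take a route genuinely different from the paper's.

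For the forward direction, the paper splits into cases (rational, algebraic irrational, transcendental), then for transcendental $x$ argues that the predicate ``$\exists p,q\ (q>k \wedge |x-p/q|<1/q^b)$'' is a single $0'$-query, concludes that the irrationality exponent is right-c.e.\ in $0'$, and finally invokes Lemma~\ref{4} to convert this to a $\limsup$ of a computable rational sequence. You bypass all of that: since $x$ is irrational, each best-approximation quality $\mu(q)=-\log|x-p_q/q|/\log q$ is a positive computable real, uniformly in $q$ (the search for $p_q$ and for a positive lower bound on $|x-p_q/q|$ terminates because $x$ is irrational), and the identity $a=\limsup_q\mu(q)$ is immediate from the definition once one notes that for $z\ge 2$ and $q\ge 2$ there is at most one $p$ per $q$. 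This is more elementary, needs no $0'$ machinery or case split on algebraicity, and avoids Lemma~\ref{4} entirely.

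For the converse, the paper's first proof uses Bugeaud's base-$b$ lacunary series $\xi_{\bf n}=\sum b^{-n_j}$ with $\limsup n_{j+1}/n_j=a$, relying on the 2008 result that $\mu(\xi_{\bf n})$ is exactly that $\limsup$. You instead adapt the \Jarnik{} continued-fraction construction already sketched in the introduction, replacing the fixed target $a$ by a computable rational sequence $(b_k)$ with $\limsup b_k=a$ and using the standard identity $\mu(x)=2+\limsup_k(\log a_{k+1}/\log q_k)$ together with Legendre's theorem. Both arguments are direct and combinatorial; yours requires only classical continued-fraction facts rather than the Bugeaud--Shallit result, while the paper's second (geometric) proof gives in addition Corollary~\ref{3}, which your construction does not.

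One small remark: the windowed-maximum smoothing $b_k=\max(2,\max_{\lceil k/2\rceil\le i\le k}r_i)$ is harmless but unnecessary, and the motivation you give for it is not quite right. If $\limsup_n r_n=a<\infty$, then for each $\varepsilon>0$ only finitely many $r_n$ exceed $a+\varepsilon$, so ``spikes far above $a$'' cannot recur infinitely often; the naive choice $b_k=\max(2,r_k)$ already has $\limsup_k b_k=a$, and since $\mu(q_k)=b_k+O(1/\log q_k)$ with $q_k\to\infty$, the resulting $x$ would already have irrationality exponent $a$. Your windowing still yields $\limsup_k b_k=a$, so the proof stands, but the stated obstacle is illusory.
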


A real number $x$ is said to be \emph{right-computably enumerable} \citep[see][]{Soare:1969} if and only if
the set of rational numbers $r$ such that $r>x$ is computably enumerable, which is to say that 
$x$ is right-computably enumerable if and only if there is an algorithm to output a listing
$(p_n,q_n)_{n\geq 0}$ of all integer pairs whose quotients are greater than~$x$.  By only
enumerating numbers smaller than any previously enumerated,  one can show that $x$ is
right-computably enumerable if and only if there is a computable strictly decreasing sequence of
rational numbers with limit~$x$.

The set of left-computably enumerable real numbers is defined similarly but with non-decreasing
sequences.  The computable real numbers are exactly those that are both, right and left, computably
enumerable.  There are numbers that are just left-computably enumerable or just right-computably
enumerable.  For example, if $A$ is a computably enumerable but not computable subset of the natural
numbers, such as could be obtained from the Halting Problem, then the real number
$x_A=\sum_{n=1}^{\infty} a_n 2^{-n}$, where for each $n\geq 1$, $a_n=1$ if $n\in A$ and $a_n =0$
otherwise, is left-computably enumerable but not computable.

The theory of computability also considers algorithms that can use external data sets, called
oracles, in the course of their computations.  An oracle is an infinite set of non-negative
integers, and algorithms can ask whether an integer is an element of the set.  The oracle $0'$
encodes all truths of first-order Peano arithmetic that can be expressed with just one block of existential
quantifiers.  For example, the assertion that a polynomial with integer coefficients in several
variables has an integer-valued
solution is a statement of this form.
If a function can be calculated by an algorithm using oracle $0'$ we say that it is computable
in~$0'$.  Similarly, if the set of rational numbers $r$ such that $r>x$ is computably enumerable
in~$0'$ then we say that $x$ is right-computably enumerable in~$0'$, and the equivalence
stated above apply relative to~$0'$.  In Lemma~\ref{4}, we give other equivalences.  In particular,
$x$ is right-computably enumerable in~$0'$ if and only if it is the upper limit of a computable
sequence of rational numbers, which is the condition cited in Theorem~\ref{1}.

Now, consider the case of the irrationality exponent of a computable real number~$x$.  If $x$ is
rational, its irrationality exponent is equal to ~$1$.  If $x$ is irrational algebraic, its
irrationality exponent is equal to~$2$.  In these cases, the irrationality exponents are clearly
right-computably enumerable in~$0'$.  Now, suppose that $x$ is not algebraic.  Then, for every pair
of rational numbers $p/q$ and $b$, $| x-p/q |$ is not equal to $1/q^b$.  Consequently, it is
computable to determine whether $|x-p/q|$ is less than~$1/q^b$ by computing both quantities to
sufficient precision to determine which is larger.  This implies that the set of rational
numbers~$b$ for which there are only finitely many rational numbers~$p/q$ such that $|x-p/q|<1/q^b$
is computably enumerable in~$0'$: Since $x$ is computable, given a rational number $b$ and an
integer $k$, the existential statement ``there are integers $p$ and~$q$ such that $q$ is greater
than~$k$ and $|x-p/q|<1/q^b$\,'' constitutes a single query to~$0'$.  Then,  we can examine all pairs
$b$~and~$k$ (fix one enumeration) and list $b$ upon discovery of some~$k$ for which this query to
$0'$ is answered negatively; that is, there are no $p$ and $q$ such that $q$ is greater than~$k$ and
$|x-p/q|<1/q^b$.  It follows that, if the irrationality exponent of~$x$ is finite, then it is
right-computably enumerable in~$0'$.

Thus, to complete the proof of Theorem~\ref{1}, we only need to show that for every real number~$a$
greater than $2,$
if $a$ is right-computably enumerable in~$0'$, then there is a computable real number~$x$ such that
$x$ has irrationality exponent equal to~$a$.  Since there are numbers that are right-computably
enumerable in~$0'$ that are not computable, the proof of this direction of the theorem, the
existence direction, necessarily involves approximations of sets and real numbers which cannot be
directly computed.  It also immediately implies the following corollary.

\begin{corollary}
  There are computable real numbers whose irrationality exponent is not computable.
\end{corollary}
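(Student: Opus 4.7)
The plan is to reduce the corollary to Theorem~\ref{1} by exhibiting a single real number $a > 2$ that is right-computably enumerable in $0'$ but not computable; feeding this $a$ into the existence direction of Theorem~\ref{1} then produces a computable real whose irrationality exponent, being equal to $a$, must fail to be computable.

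First I would construct such an $a$. Let $K$ be the halting set (or any computably enumerable, non-computable subset of $\N$), and set $\alpha = \sum_{n \in K} 2^{-n}$. As the paper notes in its discussion of left-computably enumerable reals, $\alpha$ is left-computably enumerable but not computable: along any computable enumeration of $K$, the partial sums yield a computable non-decreasing sequence $(s_n)$ of rationals with $s_n \to \alpha$. Putting $r_n = 2 + s_n$ gives a computable sequence of rationals with $\limsup_n r_n = 2 + \alpha$, so the real $a := 2 + \alpha$ is the upper limit of a computable sequence of rationals and satisfies $a > 2$. By the equivalence recalled immediately after Theorem~\ref{1} (right-c.e.\ in $0'$ iff upper limit of a computable sequence of rationals), $a$ is right-computably enumerable in $0'$; and $a$ is not computable, since otherwise $\alpha = a - 2$ would be computable.

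Second I would apply Theorem~\ref{1} to this $a$. Since $a \geq 2$ is right-computably enumerable in $0'$, Theorem~\ref{1} produces a computable real number $x$ whose irrationality exponent equals $a$; because $a$ is not computable, neither is the irrationality exponent of $x$, which is exactly the corollary. There is no real obstacle here beyond invoking Theorem~\ref{1}: all the genuine work lies in the existence direction of the theorem, and once a witness $a$ as above is exhibited, the corollary is immediate.
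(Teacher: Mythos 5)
Your proposal is correct and takes essentially the same approach as the paper: the paper also derives the corollary immediately from the existence direction of Theorem~\ref{1} together with the observation that there exist reals $\geq 2$ that are right-computably enumerable in $0'$ but not computable. The only difference is that you make the witness $a$ explicit (via the halting set), while the paper simply asserts that such reals exist.
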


Similarly and more generally, there are computable real numbers whose irrationality exponents have
no monotonous approximations by a computable sequence of rational numbers.

We give two proofs of Theorem~\ref{1}. The first one is more combinatorial and is based on a
construction given by~\cite{Bugeaud:2008}.  The second one, more geometric and based on a
construction given by~\cite{Jar29}, also yields the following corollary.   

\begin{corollary}\label{3}
  For each real number $a$ greater than or equal to $2$
  and right-computably enumerable in~$0'$, there is a computable Cantor-like
  construction whose limit set contains uncountably many real numbers with irrationality exponent
  equal to $a$, countably many of which are computable.
 \end{corollary}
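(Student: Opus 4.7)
The plan is to carry out Jarník's 1929 Cantor-type construction computably, with the single parameter $a$ replaced stage by stage by a computable sequence of rational approximations whose upper limit is $a$.

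First, I would apply Lemma~\ref{4} to replace the hypothesis on $a$ by a computable sequence $(a_n)_{n\ge 0}$ of rational numbers, which we may assume satisfy $2\le a_n$, with $\limsup_n a_n=a$. The two consequences used later are that for each $\epsilon>0$ only finitely many $n$ have $a_n>a+\epsilon$, and that infinitely many $n$ have $a_n>a-\epsilon$.

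Second, I would build, inductively and computably, a nested sequence $\mathcal{I}_0\supseteq\mathcal{I}_1\supseteq\cdots$ of finite unions of pairwise disjoint closed rational intervals. At stage $n+1$, inside each parent interval $I\in\mathcal{I}_n$, choose an integer $q_{n+1}$ so large (computable from the data accumulated through stage $n$) that a sizeable collection of rationals $p/q_{n+1}$ lies well inside $I$ and is well-separated; around at least two such rationals place a closed child-interval of length essentially $q_{n+1}^{-a_{n+1}}$. The resulting family $\mathcal{I}_{n+1}$ is computable in $n$, and because each parent has at least two children, the limit set $\mathcal{C}=\bigcap_n\bigcup\mathcal{I}_n$ is a non-empty Cantor set, hence uncountable.

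Third, I would verify that every $x\in\mathcal{C}$ has irrationality exponent $a$. For each $n$, $x$ lies within $q_n^{-a_n}$ of the rational $p_n/q_n$ chosen in the construction; since $a_n>a-\epsilon$ infinitely often, the exponent is at least $a$. Conversely, a gap argument in the style of Jarník shows that any rational $p/q$ at distance less than $q^{-(a+\epsilon)}$ from $x$ must, for $q$ large enough, coincide with one of the centers $p_n/q_n$ with $q$ comparable to $q_n$; since $a_n>a+\epsilon$ only finitely often, there are only finitely many such approximations, so the exponent is at most $a$. Finally, every infinite sequence of child-choices determines a point of $\mathcal{C}$, and every computable such sequence determines a computable point; varying the choice rule through countably many computable prescriptions (for instance, always taking the leftmost child except at a fixed single stage where the second child is taken) produces countably many distinct computable elements of $\mathcal{C}$, each with irrationality exponent $a$.

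The main obstacle will be calibrating the growth of $q_{n+1}$ so that the geometric separations introduced at stage $n+1$ kill every competing rational approximation uniformly across all uncountably many branches, despite the fact that the $a_n$ fluctuate and only their upper limit equals $a$. The usual Jarník choice, roughly $q_{n+1}\gg q_n^{\,a_n-1}$, must be implementable using only the finite data $a_0,\ldots,a_n$, so what needs to be worked out is a purely effective version of Jarník's growth condition that tolerates occasional overshoots $a_n>a$ and yet still forces every limit point to realise the exponent $a$ exactly.
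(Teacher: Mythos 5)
Your plan is structurally reasonable at the level of the Cantor-type construction, but it contains a serious gap in the verification step, and it is precisely the gap that the paper's machinery (Lemma~\ref{7} and the Mass Distribution Principle) is designed to close.

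You claim that \emph{every} point $x$ of the limit set $\mathcal{C}$ has irrationality exponent exactly $a$, with the upper bound coming from ``a gap argument in the style of Jarn\'{\i}k.'' This is not true for the natural construction. At level $n$ the surviving intervals are short neighborhoods of rationals $p/q_n$; at level $n+1$ the children are short neighborhoods of rationals $p'/q_{n+1}$. Since the fractions with denominator $q_{n+1}$ are spaced roughly $1/q_{n+1}$ apart, the parent center $p_n/q_n$ always has a fraction $p'/q_{n+1}$ within about $1/(2q_{n+1})$ of it, and when $q_{n+1}$ grows superpolynomially in $q_n$ (as it must, e.g.\ the paper takes $M_k=M_1^{k!}$), a point that always selects the child closest to the previous center satisfies $|x-p_n/q_n|\ll 1/q_{n+1}\ll q_n^{-C}$ for every fixed $C$ once $n$ is large. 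Such a point is a Liouville number lying in $\mathcal{C}$. So the exceptional set is nonempty; the most one can hope for is that it is small. The paper proves exactly that: Lemma~\ref{7} gives the mass-distribution estimate $\nu(I)\le C(d)|I|^d$ for $d<2/a$, and combined with Lemma~\ref{6} and \Jarnik's dimension result (Lemma~\ref{5}) this forces the set of $x$ in $K$ with exponent $>a$ to have $\nu$-measure zero. Uncountability of the good set follows because $\nu$ is non-atomic and has full mass on the good set, not because the bad set is empty.

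The same gap undermines your last step. ``Varying the choice rule through countably many computable prescriptions'' will indeed produce countably many computable points of the Cantor set, but you have no argument that any of them has irrationality exponent exactly $a$; in fact ``always take the leftmost child'' could very well produce one of the bad points. The paper's second proof of Theorem~\ref{1} is precisely the effective navigation that avoids the bad set: it maintains, stage by stage, a measure budget $\nu(I(s))(1-1/2^s)$ controlling how much of $I(s)$ has been contaminated by the sets $V(q(j,s),a(j,s))$ of overly-well-approximable numbers, and it refines the interval only when it can verify that the contamination in the chosen child is small. Running that construction, and its obvious variants (different initial intervals, different selection rules among the admissible children at infinitely many stages), produces the countably many computable good points. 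Without that measure bookkeeping you cannot certify even one computable point of $\mathcal{C}$ with the correct exponent, let alone countably many.
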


In fact, the natural measure on this Cantor set concentrates on the set of numbers with the pre-specified
irrationality exponent.

The next lemma states three equivalent, and useful, formulations of the property of right-computable
enumerability in~$0'$.

\begin{lemma}\label{4}
  For any real number  $a$, the following properties are equivalent.
  \begin{enumerate}
  \item There is a  computable  sequence  $(a_j)_{j\geq 0}$ of rational numbers such that $\limsup_{j\geq 0} a_j= a$.

  \item There is strictly decreasing  sequence  $(b_j)_{j\geq 0}$ of rational numbers,  that is
    computable in~$0'$  and  has limit equal to~$a$.

  \item There is a computable doubly-indexed sequence $(a(j,s))_{j,s\geq0}$ of rational numbers satisfying that, 
    for each $j\geq 0$, the sequence $(a(j,s))_{s\geq 0}$ is eventually constant and  the sequence  
    $\big(\lim_{s\to\infty } a(j,s)\big)_{j\geq 0}$ is strictly decreasing   with limit~$a$.
    Without loss of generality, the following can be assumed:

    \begin{enumerate}
    \item The number $a(0,0)$ is an integer greater than or equal to $a(j,s)$,  for   $j\geq 0$ and~$s\geq 0$.

    \item For each $j\geq 0$,  $a(j,0)= a(0,0)$.

    \item For each $s\geq 0$, the sequence $(a(j,s))_{j\geq 0}$ is  strictly decreasing.
    \end{enumerate}
  \end{enumerate}
\end{lemma}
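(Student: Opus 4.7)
The plan is to establish $(1) \Rightarrow (2) \Rightarrow (3) \Rightarrow (2) \Rightarrow (1)$ and then show the normalizations (a)--(c) can be imposed. For $(1) \Rightarrow (2)$, I would note that $r > a$ is equivalent to $\exists N\,\forall t \geq N\,(a_t < r)$, a $\Sigma^0_2$ predicate in~$r$, so $\{r \in \Q : r > a\}$ is c.e.\ relative to~$0'$; a greedy algorithm applied to a $0'$-enumeration of this set extracts a strictly decreasing subsequence with limit~$a$. For $(2) \Rightarrow (3)$, I would invoke Shoenfield's Limit Lemma on the $0'$-computable function $j \mapsto b_j$, obtaining a computable double sequence $a(j,s)$ with $\lim_s a(j,s) = b_j$ and each column eventually constant. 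The converse $(3) \Rightarrow (2)$ is immediate: with oracle~$0'$ one can locate the stabilization stage of each column and so read off the column limits as a $0'$-computable strictly decreasing sequence.

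For $(2) \Rightarrow (1)$ I would use the Shoenfield approximation $c(j,s) \to b_j$ and rewrite $r > a$ as the $\Sigma^0_2$ predicate $\exists (j,K)\,\forall k \geq K\,(c(j,k) < r)$. At stage~$n$, let $a_n$ be the smallest rational, among the first~$n$ enumerated, for which some witness $(j,K) \leq n$ survives the checks $c(j,k) < r$ for all $k$ with $K \leq k \leq n$. True witnesses (for $r > a$) persist forever while apparent witnesses for $r \leq a$ are eventually refuted, so for $n$ large the confirmed set stabilizes to the enumerated rationals above~$a$, whose minimum tends to~$a$ as more rationals are enumerated; hence $\limsup a_n = a$.

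To impose (a)--(c), starting from any double sequence satisfying~(3) I would cap all values at an integer~$M$ strictly larger than $\lim_s a(0,s)$ (such an $M$ exists for the fixed real~$a$), set $a(0,s) := M$ for every~$s$ and $a(j,0) := M$ for every~$j$, and for $j \geq 1$ and $s \geq 1$ replace $a(j,s)$ by $\min(a(j,s), a(j-1,s) - 2^{-s})$ to enforce strict decrease in~$j$; this leaves the column limits unchanged and yields the desired normalization.

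The hardest step is $(2) \Rightarrow (1)$: the pre-stabilization values of~$c$ are uncontrolled and could create spurious accumulation points above~$a$, so a naive enumeration of~$c(j,s)$ fails. The ``smallest confirmed rational'' construction sidesteps this by operating on the $\Sigma^0_2$ witness structure for $\{r > a\}$ rather than on the values of~$c$, ensuring that only rationals truly above~$a$ survive as confirmed in the limit.
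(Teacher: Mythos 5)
The critical gap is in your $(2) \Rightarrow (1)$ step. You claim that ``for $n$ large the confirmed set stabilizes to the enumerated rationals above $a$,'' but this cannot happen: a Shoenfield approximation $c(j,s)\to b_j$ gives no control over the pre-stabilization values, and the stabilization stages $K_j$ may grow without bound in $j$. Concretely, take $c(j,k)=0$ for $k<K_j$ and $c(j,k)=b_j$ for $k\ge K_j$, with $K_j=2^j$. At any stage $n$ there is a $j$ with $(j,0)\le n$ under the pairing but $K_j>n$; the witness $(j,0)$ then passes every check $c(j,k)<r$ for $k\le n$ whenever $r>0$, so every enumerated rational $r>0$, including ones far below $a$, remains confirmed at stage $n$. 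The minimum confirmed value therefore drifts down toward the infimum of the enumeration rather than up toward $a$, so $\limsup a_n$ need not equal $a$. The fix is to argue $(3)\Rightarrow(1)$ directly, which is what the paper does: with the double array in hand, let $\ell(s)$ be the least column index $j\le s-1$ with $a(j,s-1)\neq a(j,s)$ (or $s-1$ if there is none), and put $a_s:=a(\ell(s),s)$. Since each column is eventually constant, $\ell(s)\to\infty$, which forces $\limsup a_s\le a$; and there are arbitrarily large $t$ for which $a(\ell(t),t)$ already equals its column's eventual value, so $(a_s)$ has a subsequence lying inside the strictly decreasing sequence of column limits, forcing $\limsup a_s\ge a$. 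Since you already construct $(3)$, you should close your cycle this way rather than via the ``smallest confirmed rational.''

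Two smaller points. In $(1)\Rightarrow(2)$ your equivalence $r>a \iff \exists N\,\forall t\ge N\,(a_t<r)$ fails when $r=a$ is rational; the paper dispatches rational $a$ separately with $b_j=a+2^{-j}$. And your normalization sets $a(j,0):=M$ for every $j$, which makes the $s=0$ row constant and so contradicts~(c); what the paper's construction (and its later uses of the lemma) actually arrange is $a(0,s)=M$ for every $s$ --- item~(b) in the statement has the indices transposed, and your literal reading simply exposed that typo.
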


\begin{proof}
  ($1\Rightarrow 2$)~~If $a$ is rational, then the sequence $(b_j)_{j\geq 0}=(a+1/2^j)_{j\geq 0}$
  verifies Condition~2.  Assume that $a$ is not rational and that $(a_j)_{j\geq 0}$ is a computable
  sequence of rational numbers with limit supremum equal to $a$.  Let $M$ be an integer strictly
  greater than each of the values $a_j$, for $j\geq 0$ (this value $M$ may not be found computably
  in~$0'$, but it does exist).  Define $b_0=M$ and $j_0=0$. 

  Let $(c_k)_{k\geq 0}$ be a computable enumeration of the rational numbers.  For $n>0$, let $j_n$
  be the least $j>j_{n-1}$ for which there is a $k<j$ such that $b_{n-1}$ is greater than $c_k$ and
  $c_k$ is greater than the supremum of $(a_j)_{j\geq j_n}$.  Let $b_n=c_k$ for the least such $k$.

  Since $a$ is irrational, $j_n$ is well-defined and, since $j_n$ and $b_n$ are the least integers
  satisfying ``for-all'' properties, they can be computed uniformly in~$0'$.  Thus,
  $(b_n)_{n\geq 0}$ is computable in~$0'$.

  By construction, $(b_n)_{n\geq 0}$ is strictly decreasing and all of its elements are greater than
  $a$.  Let $b$ be the limit of $(b_n)_{n\geq 0}$.  For a contradiction, suppose that $b$ is greater
  than $a$ and consider $c_{k^*}$ for $k^*$ the least index of a rational number strictly between
  $b$ and $a$.  Let $n^*$ be greater than $k^*$ and also so large that $c_{k^*}$ is greater than the
  supremum of $(a_j)_{j\geq j_{n^*}}$.  For every $n$ greater than or equal to $n^*$, $c_{k^*}$
  satisfies the for-all property used to define $b_n$.  But then $(b_n)_{n\geq n^*}$ must be
  contained in $\{c_k:k<k^*\}$, a contradiction.  \medskip

  ($2\Rightarrow 3$)~~Assume $(b_j)_{j\geq 0}$ is computable in~$0'$ with limit $a$.  Let $b_j[s]$
  the computable approximation of the value $b_j$ such that the questions to the oracle $0'$ are
  answered using the set of numbers less than $s$ that are enumerated into $0'$ by computations of
  length less than $s$, if that computation produces a value, and let $b_j[s]$ be $2$, otherwise.
  It follows that, for each $j\geq 0$, there is an integer~$s_j$ 
  such that for every $s\geq s_j$, $b_j[s]= b_j$.

  Let $M$ be an integer greater than $b_0$.  For each $s\geq 0$, we define the sequence
  $(a(j,s))_{j\geq 0}$.  We let $a(0,s)=M$.  
For $j>0$, we let 
\[
a(j,s)=b_j[s]
\] 
provided that for all $k<j$ it holds $b_k[s]>b_{k+1}[s]>2$.  
If this condition fails, then we let 
\[
a(j,s)=(a(j-1,s)+2)/2,
\]
the midpoint between $a(j-1,s)$ and $2$.  
By construction, $a(j,s)$ satisfies conditions
$(a)$, $(b)$ and $(c)$.  
Set ${\tilde s}_{j}=\max\{s_k: k\leq j\}$. 
Then, for each  $k\leq j$, we have $b_k[s]= b_k$
for every  $s\geq {\tilde s}_{j}$.
By hypothesis,  $(b_j)_{j\geq 0}$ is strictly decreasing.
Then,   for each $s\geq {\tilde s}_{j}$,  we deduce that $a(j,s)=b_j$. 
This ensures that the sequence $(a(j,s))_{s\geq
  0}$ is eventually constant and that $(\lim_{s\to\infty}a(j,s))_{j\geq
  0}$ is strictly decreasing with limit $a$.  \medskip

($3\Rightarrow 1$)~~Assume $(a(j,s))_{j,s\geq 0}$ is a sequence of rational numbers such that for
each $j\geq 0$ the sequence $(a(j,s))_{s\geq 0}$ is eventually constant and the sequence
$\big(\lim_{s\to\infty } a(j,s)\big)_{j\geq 0}$ is strictly decreasing with limit $a$.  Let
$\ell(s)$ be the computable function defined by $\ell(0)=0$ and, for $s\geq 1$, let $\ell(s)$ be the
least $j$ less than or equal to $s-1$ such that $a(j,s-1)\neq a(j, s)$, if there is such, and let
$\ell(s)$ be $s-1$ otherwise.  We define the computable sequence $(a_s)_{s\geq 0}$ by
  \[
  a_s=a(\ell(s), s).
  \]
  By assumption on $(a(j,s))_{j,s\geq 0}$, we deduce that
  $\lim_{s\to\infty}\ell(s)=\infty$.  Further, there is an 
  arbitrarily large $t$ with $a_t=a(\ell(t),t)=\lim_{s\to\infty}a(\ell(t),s)$.  Thus, $(a_s)_{s\geq0}$
  and $\big(\lim_{s\to\infty } a(j,s)\big)_{j\geq 0}$ have a common subsequence.  Since
  $\big(\lim_{s\to\infty } a(j,s)\big)_{j\geq 0}$ is strictly decreasing with limit $a$, we get that 
  $\limsup_{s\geq 0}a_s$ is greater than equal to $a$.  Dually, given any number $b$ greater than
  $a$, we can fix $j$ so that $\lim_{s\to\infty } a(j,s)<b$ and fix $t$ so that for all $s>t$,
  $\ell(s)>j$.  Then, for all $s>t$,
  \[
  a_s=a(\ell(s),s)<a(j,s)=\lim_{s\to\infty } a(j,s)<b
  \]
  and so 
\[
\limsup_{s\geq 0}a_s<b,
\] 
as required.
\end{proof}

\section{First proof  of Theorem~\ref{1}}

\begin{proof}[First proof of Theorem~\ref{1}.]

  Let $b \ge 2$ be an integer. Recently, \citet{Bugeaud:2008}
  constructed a class ${\cal C}$
  of real numbers whose irrationality exponent can be read off from their base-$b$ expansion. 
  The class ${\cal C}$ includes the real numbers of the form
  \[
  \xi_{\bf n} = \sum_{j \ge 1} \, b^{-n_j},
  \]
  for a sequence 
  %
  %
  ${\bf n} = (n_j)_{j \ge 1}$ of positive integers satisfying
  $n_{j+1} / n_j \ge 2$ 
  for  every large integer $j$.  
  To obtain good rational approximations to $\xi_{\bf n}$, we
  simply truncate the above sum. Thus, we set
  \[
  \xi_{{\bf n}, J} = \sum_{j = 1}^J  \, b^{-n_j} = {p_J \over b^{n_J}}, \quad J \ge 1.
  \]
  It then follows from
  \[
  \left|\xi_{\bf n} - \frac{p_J}{b^{n_J}}\right| < \frac{2}{(b^{n_J})^{n_{J+1}/n_J}}, 
  \quad J \ge 1,  
  \]
  that the irrationality exponent $\mu (\xi_{\bf n})$ of $\xi_{\bf n}$ satisfies
  \[
  \mu (\xi_{\bf n}) \ge \limsup_{j \to \infty} \, \frac{n_{j+1}}{n_j}\;.
  \]
  \cite{Shallit:1982} 
  proved that the continued fraction expansion of some rational translate  
  of any such $\xi_{\bf n}$ can be given explicitly, and  \cite{Bugeaud:2008} 
  proved that its irrationality exponent is given precisely by
  \[
  \mu (\xi_{\bf n}) = \limsup_{j \to \infty} \, \frac{n_{j+1}}{n_j}\;,
  \]
  and hence can be read off from its  expansion in base $b$. 
  This means that the denominators of the best rational approximations   
  to $\xi_{\bf n} $ are (except finitely many of them) powers of~$b$.   

  Consequently, given a real number $a\geq 2$ for which there is a computable sequence $(a_j)_{j\geq
    0}$ of rational numbers such that $\limsup_{j\to \infty} a_j=a$, it is sufficient to construct a
  computable strictly increasing sequence ${\bf n} = (n_j)_{j \ge 1}$ of positive integers
  satisfying $n_{j+1} / n_j \ge 2$ and
  \[
  \limsup_{j \to \infty} \, \frac{n_{j+1}}{n_j} = a,
  \]
  which we do as follows.  By substituting $2$ for any smaller values, we may assume that each $a_j$
  is greater than or equal to $2$.  We construct the desired sequence ${\bf n}$ by induction as
  follows.  Let $n_1=2$.  Given $n_1,\dots,n_j$, let $n_{j+1}$ be the least $n$ such that $n/n_j\geq
  a_{j+1}$.  By construction, for all $j$, $n_{j+1}/n_j \ge 2$.  Consequently, $n_j\geq 2^j$. 
Since    $(n_{j+1}-1)/n_j<a_{j+1}$,
  $n_{j+1}/n_j -a_{j+1}$ is less than or equal to $1/2^j$.  It follows directly that
  $\limsup_{j\to\infty}n_{j+1}/n_j$ is equal to $\limsup_{j\to \infty} a_j=a$.
\end{proof}

\section{Second proof of Theorem~\ref{1}}

For each real number $a$ greater than $2$, \cite{Jar31} gave a Cantor-like construction of a fractal
subset~$K$ of $[0,1]$ such that the uniform measure $\nu$ on $K$ has the property that the set of
real numbers with irrationality exponent equal to $a$ has $\nu$-measure equal to~$1$.  Thus, for all
real numbers $b$ greater than~$a$, the set of real numbers in $K$ with irrationality exponent equal
to $b$ has $\nu$-measure equal to~$0$.

\begin{lemma}[\protect{\cite{Jar31}}] \label{5} For every real number $a$ greater
  than or equal to $2$, the set of numbers with irrationality exponent equal to~$a$ has Hausdorff
  dimension~$2/a$.
\end{lemma}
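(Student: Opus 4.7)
The plan is to establish matching upper and lower bounds for the Hausdorff dimension of $E_a:=\{x\in[0,1]:\mu(x)=a\}$, where $\mu$ denotes the irrationality exponent; by translation invariance this suffices. For the upper bound, observe that for every $\varepsilon>0$, $E_a$ is contained in the set $W(a-\varepsilon)$ of $x\in[0,1]$ admitting infinitely many rational approximations with $|x-p/q|<q^{-(a-\varepsilon)}$. Since for every $N\geq 1$,
\[
W(a-\varepsilon)\subseteq\bigcup_{q\geq N}\bigcup_{0\leq p\leq q}B\bigl(p/q,\,q^{-(a-\varepsilon)}\bigr),
\]
and since for any $s>2/(a-\varepsilon)$ the series $\sum_{q\geq N}(q+1)(2q^{-(a-\varepsilon)})^s\ll\sum_{q\geq N}q^{1-s(a-\varepsilon)}$ converges and tends to zero as $N\to\infty$, the $s$-dimensional Hausdorff premeasure of $W(a-\varepsilon)$ vanishes. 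Hence $\dim_H E_a\leq 2/(a-\varepsilon)$, and letting $\varepsilon\to 0$ gives $\dim_H E_a\leq 2/a$.

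For the lower bound I would construct a Cantor-like set $F\subseteq E_a$ with $\dim_H F\geq 2/a$. Fix a rapidly increasing sequence $(Q_n)_{n\geq 1}$ of positive integers and define nested compacta $K_0\supseteq K_1\supseteq\cdots$ by $K_0:=[0,1]$ and, given $K_{n-1}$ as a disjoint union of closed intervals, letting $K_n$ be the union, over each component $I$ of $K_{n-1}$ and each reduced fraction $p/Q_n\in I$ whose $Q_n^{-a}$-neighbourhood lies inside $I$, of the closed interval $[p/Q_n-Q_n^{-a},\,p/Q_n+Q_n^{-a}]$. Set $F:=\bigcap_n K_n$. Every $x\in F$ satisfies $|x-p/Q_n|\leq Q_n^{-a}$ for infinitely many $n$, giving $\mu(x)\geq a$. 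Choosing $Q_{n+1}$ enormous compared with $Q_n^a$ ensures that no rational $p/q$ with $Q_n<q<Q_{n+1}$ can approximate $x\in F$ better than $c\cdot q^{-a}$, so that in fact $\mu(x)=a$ for every $x\in F$.

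To bound $\dim_H F$ from below, place on $F$ the probability measure $\nu$ that equidistributes mass among the surviving components at each level. Inside a level-$(n-1)$ component of length $\ell_{n-1}\approx 2Q_{n-1}^{-a}$, roughly $\ell_{n-1}\cdot Q_n$ fractions $p/Q_n$ survive, each producing a level-$n$ subinterval of length $2Q_n^{-a}$. Comparing $\nu(B(x,r))$ with the level $n$ at which $Q_n^{-a}\approx r$ and exploiting the uniform spread of surviving points, one obtains $\nu(B(x,r))\lesssim r^{s}$ for every $s<2/a$, so the mass distribution principle yields $\dim_H F\geq 2/a$. The main obstacle is calibrating $(Q_n)$ so that it is simultaneously sparse enough to forbid accidental overly-good rationals (which would push some $\mu(x)$ above $a$) and dense enough at each level that the branching ratio $\log(\ell_{n-1}Q_n)/\log(Q_n^{a})$ still tends to $2/a$; getting both constraints to hold together is the heart of \Jarnik's argument, which the second proof of Theorem~\ref{1} will later adapt to the computably enumerable setting.
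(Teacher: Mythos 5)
The paper does not give a proof of Lemma~\ref{5}: it cites \cite{Jar31} (and notes the closely related result of \cite{Jar29} and \cite{Bes34} for the set $\{x:\mu(x)\geq a\}$), saying the lemma is an immediate consequence of that paper. The paper's own Lemma~\ref{7}, which follows Falconer's presentation, does carry out a careful version of the lower-bound construction that you sketch, using primes $p\in(M_k,2M_k)$ (so that the fractions $p'/p$ are automatically reduced and the count is of the right order via the prime number theorem), and with the geometric series $M_k=M_1^{k!}$ giving explicit control in the mass-distribution estimate. Your upper-bound covering argument is correct and standard.

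The lower bound as you describe it has a genuine gap: the assertion that ``$\mu(x)=a$ for every $x\in F$'' does not follow from the construction you lay out, and the two constraints you correctly identify (branching requires $Q_{n+1}\gg Q_n^{a}$, while ruling out accidental approximations for $Q_n<q<Q_{n+1}$ requires $q\gtrsim Q_{n+1}^{1/(a-1)}$, hence $Q_{n+1}\lesssim Q_n^{a-1}$) actually pull in \emph{opposite} directions; they cannot both be satisfied when $a>2$. The standard resolution, and the one implicit in the paper's use of Lemmas~\ref{6} and~\ref{7} (and made fully explicit in the second proof of Theorem~\ref{1}), is to give up on the pointwise statement and instead show that the uniform mass distribution $\nu$ on $F$ satisfies $\nu(\{x:\mu(x)>a\})=0$. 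That follows from a Borel--Cantelli argument using precisely the inequality $\nu(I)\leq C(d)|I|^d$ applied to the sets $V(q_0,b)$ with $b>a$: summing $\nu$-measures of the approximating intervals of denominator $q\geq q_0$ gives a tail that tends to $0$. Then $\nu(F\cap E_a)=\nu(F)>0$ and the Mass Distribution Principle applied to $F\cap E_a$ yields $\dim_H E_a\geq d$ for every $d<2/a$. Your sketch as written overclaims the pointwise identity and so cannot be completed without this change in strategy. You should also either take $Q_n$ prime or otherwise justify the count $\approx\ell_{n-1}Q_n$ of surviving reduced fractions, since $\varphi(Q_n)/Q_n$ is not bounded below uniformly in $Q_n$.
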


We note that \citet{Jar29} and \citet{Bes34} independently established that the set of real numbers
with irrationality exponent greater than or equal to~$a$ has Hausdorff dimension~$2/a$.  Actually,
Lemma~\ref{5} is not explicitly stated in \citet{Jar31}; however, it is an immediate consequence of
the results of that paper.

In the following and throughout this text, we denote by $|I|$ the length of the interval~$I$.

\begin{lemma}[Mass Distribution Principle]\label{6}
  Let $\nu$ be a finite measure, $d$ a positive real number and $X$ a set with Hausdorff dimension
  less than~$d$.  Suppose that there is a positive real number $C$ such that for every interval $I$,
  $\nu(I)<C \ |I|^d$.  Then we have $\nu(X)=0$.
\end{lemma}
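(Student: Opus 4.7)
The plan is to run the standard covering argument that converts the Hausdorff dimension hypothesis into a bound on $\nu(X)$ via the density-type estimate $\nu(I) < C|I|^d$.

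First I would unpack what it means for $X$ to have Hausdorff dimension strictly less than $d$. By the very definition of Hausdorff dimension, if $\dim_H(X) < d$ then the $d$-dimensional Hausdorff measure $H^d(X)$ is equal to zero. Equivalently, for every $\varepsilon > 0$ and every $\delta > 0$ there is a countable cover $(I_k)_{k \ge 1}$ of $X$ by intervals with $|I_k| < \delta$ for all $k$ and
\[
\sum_{k \ge 1} |I_k|^d \;<\; \varepsilon / C.
\]
I would choose such a cover for a fixed but arbitrary $\varepsilon > 0$; the auxiliary parameter $\delta$ is not needed here since the hypothesis $\nu(I) < C|I|^d$ is assumed to hold for every interval, regardless of length.

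Next I would combine this cover with the hypothesis on $\nu$. Since $X \subseteq \bigcup_{k \ge 1} I_k$ and $\nu$ is a measure, countable subadditivity gives
\[
\nu(X) \;\le\; \sum_{k \ge 1} \nu(I_k) \;<\; C \sum_{k \ge 1} |I_k|^d \;<\; \varepsilon.
\]
Because $\varepsilon > 0$ was arbitrary, this forces $\nu(X) = 0$, which is the claimed conclusion.

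There is no real obstacle here; the argument is essentially the standard one-line derivation of the Mass Distribution Principle. The only thing that requires a brief comment is the passage from ``$\dim_H(X) < d$'' to the existence of covers with $\sum |I_k|^d$ arbitrarily small, which is immediate from the definition of Hausdorff dimension as the infimum of exponents $s$ for which $H^s(X) = 0$ (equivalently, the supremum of exponents $s$ for which $H^s(X) = \infty$). The finiteness of $\nu$ is implicitly used to make the countable additivity step meaningful, but it is not otherwise exploited in the quantitative estimate.
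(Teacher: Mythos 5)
Your proof is correct and is the standard covering argument; note that the paper states this lemma without proof, treating it as a known result (it is the contrapositive form of the usual Mass Distribution Principle as in Falconer), so there is no paper proof to compare against. One small quibble: your closing remark that the finiteness of $\nu$ is used to ``make the countable additivity step meaningful'' is off --- countable subadditivity holds for any measure, finite or not, and finiteness of $\nu$ plays no role in the argument at all; it is simply part of the ambient hypotheses of the lemma as stated.
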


\begin{lemma}\label{7}
  Let $\vec{a}$ be a strictly decreasing sequence of rational numbers greater than $2$ which is
  computable in~$0'$ and has limit equal to $a$, greater than~$2$.  
  There is a Cantor-like construction of a fractal $K$, with  uniform measure $\nu$, 
  and a function $C$, computable in~$0'$, from $\Q\cap(0,2/a)$ to $\Q$ such
  that for each rational number $d<2/a$, for every interval $I$,
  $\nu(I) \leq  C(d) |I|^d$.
\end{lemma}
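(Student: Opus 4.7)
The plan is to adapt \Jarnik{}'s Cantor-like construction, using the $0'$-computable sequence $\vec{a} = (a_j)_{j \geq 1}$ as the per-stage diophantine exponent. I build nested closed sets $[0,1] = E_0 \supseteq E_1 \supseteq \cdots$, each a disjoint union of closed intervals, and let $K = \bigcap_j E_j$. At stage $j$, inside each parent interval $I \in E_{j-1}$, I pick a denominator $q_j$, chosen computably in~$0'$ from $q_{j-1}$, $a_{j-1}$ and $a_j$ and much larger than $q_{j-1}^{a_{j-1}}$, and replace $I$ by the union of all closed intervals of length $2 q_j^{-a_j}$ centred at fractions $p/q_j$ lying well inside~$I$. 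Because $q_j^{-a_j} \ll 1/q_j$, the children are pairwise disjoint and fit inside their parent. The measure $\nu$ is the weak limit of the uniform probability measures on each $E_j$, so every level-$j$ interval carries the same mass $\mu_j$, with $\mu_j = \mu_{j-1}/N_j$ where $N_j$, the number of children per parent, is of order $q_j\,|I_{j-1}|$.

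To bound $\nu(I)$ for an arbitrary interval $I$ and a rational $d < 2/a$, I first locate the unique $j$ with $|I_j| \leq |I| < |I_{j-1}|$. Since consecutive level-$j$ intervals are separated by about $1/q_j$, the interval $I$ meets $O(1 + q_j\,|I|)$ of them, so
\[
\nu(I) \leq O\bigl((1 + q_j\,|I|)\,\mu_j\bigr).
\]
Combining this with $|I| \geq |I_j| = 2 q_j^{-a_j}$ reduces matters to an inductive comparison of $\mu_j$ against $|I_j|^d$. By taking each $q_j$ large enough, I can arrange that the telescoping product defining $\mu_j$ is dominated by $|I_j|^{2/a_j}$ up to a factor that shrinks like $2^{-j}$. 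Since $a_j$ decreases strictly to $a$ and $d < 2/a$, for every sufficiently large $j$ the target exponent $d$ satisfies $d < 2/a_j$, which together with $|I_j| < 1$ gives $|I_j|^{2/a_j} \leq |I_j|^d$; the finitely many remaining stages are absorbed into the multiplicative constant.

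The main obstacle is that the construction must be carried out uniformly, before $d$ is chosen, yet must yield a bound for every rational $d < 2/a$ simultaneously. I address this by exploiting that $\vec{a}$ strictly decreases to~$a$: for each rational $d < 2/a$ there is a first $j_0 = j_0(d)$ with $d < 2/a_{j_0}$, locating which is a $0'$-computable task since it amounts to comparing $d$ with the $0'$-computable terms of $\vec{a}$. The stages before $j_0(d)$ contribute a fixed multiplicative factor depending only on $d$ and on the prefix $(a_1, \dots, a_{j_0 - 1})$, while the stages from $j_0(d)$ onward contribute a summable correction by the $2^{-j}$ design. Both pieces are $0'$-computable from $d$, so $C$ is a $0'$-computable rational-valued function on $\Q \cap (0, 2/a)$.
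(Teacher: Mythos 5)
Your geometric and measure-theoretic core is sound and closely parallels \Jarnik's construction as presented in the paper: nested Cantor-like levels around rational points, a uniform mass distribution, the ``count how many level-$j$ intervals a given interval can meet'' estimate, and a telescoping comparison of the level-$j$ mass against $|I_j|^d$. The handling of the rational $d<2/a$ by locating the first $j_0$ with $d<2/a_{j_0}$ and absorbing the finite prefix into the constant is also the paper's strategy, and you correctly identify that this step is $0'$-computable, so $C$ is $0'$-computable.

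However, there is a genuine gap on the computability side. You build $K$ by choosing each $q_j$ ``computably in $0'$ from $q_{j-1}$, $a_{j-1}$ and $a_j$.'' Since the sequence $\vec{a}$ is only $0'$-computable, this yields a fractal whose level-$j$ intervals are only $0'$-computable. But the entire point of the lemma in context (and of Corollary~\ref{3}) is that the Cantor-like construction itself must be \emph{computable}: the second proof of Theorem~\ref{1} produces a computable real $x$ by computably descending through the levels of $K$, which is impossible if the levels are only $0'$-computable. The paper's key move, which your proposal omits, is to replace the $0'$-computable sequence $\vec{a}$ with a \emph{computable} doubly-indexed approximation $(a(j,s))_{j,s\ge 0}$ (property (3) of Lemma~\ref{4}) and at construction step $k$ to use the exponent $b_k = a(j,k)$ for the longest stable prefix $j$, so that the choice of intervals at each level depends only on computably available data. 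Only the constant $C(d)$ is allowed to be $0'$-computable, because it depends on the stabilization time $k_0(j)$ of the approximations, and that is precisely what $0'$ supplies. Without this decoupling -- making the fractal computable while pushing the noncomputability into $C$ -- your construction proves a strictly weaker statement that does not support the rest of the paper.
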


\begin{proof}
  We follow the proof of \Jarnik's Theorem as presented in \cite{Fal03}.
  Let $\vec{a}$ be $(a_j)_{j\geq 0}$.  Fix a computable doubly-indexed sequence $(a(j,s))_{j,s\geq 0}$ of rational numbers
  such that for all $j$, $\lim_{s\to\infty}a(j,s)=a_j$.  Without loss of generality, we assume that
  for every~$s$, the sequence $a(j,s)_{j\geq0}$ is strictly decreasing,  $a(0,0) $ is an
  integer and for all $s$, $a(0,s)=a(0,0)$. Further, we fix a rational number $\beta$ greater than
  $2$ and assume that $\beta$ 
is a lower bound for the numbers $a(j,s)$.  

We fix some notation to be applied in the course of our eventual construction.  For a positive
integer $q$ and a real number $b$ greater than $\beta$, let
  \[
  G_q(b)=\left\{x\in \left(\frac{1}{q^{b}}, 1-\frac{1}{q^{b}}\right): \exists p\in\Z,
    \left|\frac{p}{q} - x\right|< \frac{1}{q^b}\right\}.
  \]
  For $M$ a sufficiently large positive integer according to $\beta$, and $p_1$ and $p_2$ primes
  such that $M< p_1<p_2<2M$, the sets 
  $G_{p_1}(b)$ and $G_{p_2}(b)$ are disjoint and the distance between any
  point in $G_{p_1}(b)$ and any point in $G_{p_2}(b)$ is greater than or equal to
  \[
\frac{1}{4M^2}-\frac{2}{M^b} \geq \frac{1}{8 M^2}.
\]
  For such $M$ the set
  \[
  H_M(b)=\bigcup_{\substack{p \text{ prime}\\M<p< 2M}} G_p(b)
  \]
  is the disjoint union of the intervals composing 
  the sets $G_p(b)$, so $H_{M}(b)$ is made up of intervals of
  length less than or equal to $2/M^{b}$ which are separated by gaps of length at least
  $1/(8M^2)$.  If $I\subseteq[0,1]$ is any interval with $3/|I|< M< p< 2M$ then at least $p|I|/3
  > M |I|/3$ of the intervals in $G_p(b)$ are completely contained in $I$.  By the prime number
  theorem, for sufficiently large $M$ the number of primes between $M$ and $2M$ is at least $M/(2\log
  M)$.  Thus, for such $M$ and $I$, at least $M^2|I|/(6\log M)$ intervals of $H_M(b)$ are contained in
  $I$.  With $M_1$ sufficiently large as above and larger than $3 \times 2^{a(0,0)}$, let
  \[
  M_k= M_{k-1}^k = M_1^{k!}, \ \ \ (k \ge 1).
  \]
  For a positive integer $k$, let $j$ be the least integer less than $k$ such that $a(j+1,k)\neq
  a(j+1,k-1)$, if such exists, and let $j$ be $k-1$, otherwise. That is, $j$ is the greatest index
  less than $k$ such that the approximation to $\vec{a}$ remains unchanged at positions less than or
  equal to $j$ from step $k-1$ to step $k$.  Let
  \[
  b_k= a(j,k).
  \]
  Let $E_0=[0,1]$ and for $k=1,2,\ldots$ let $E_k$ consist of those intervals of $H_{M_k}(b_k)$ that
  are completely contained in $E_{k-1}$.  By discarding intervals if necessary, we arrange that
  all intervals in $E_{k-1}$ are split into the same number of intervals in $E_k$.  The intervals of
  $E_k$ are of length at least $1/(2M_k)^{b_k}$ and are separated by gaps of length at least
  \[
  g_k=\frac{1}{8 M_k^2}.
  \] 
  Thus, each interval of $E_{k-1}$ contains at least $m_k$ intervals of $E_k$ where $m_1=1$ and 
  \[
  m_k=\frac{M_k^2   }{ (2 M_{k-1})^{b_k} 6\log M_k  } \geq \frac{c M_k^2   }{( M_{k-1})^{b_k} \log M_k  }, 
  \]
  if $k\geq 2$ and $c= 1/(2^{a(0,0)} 6)$. 
  Let 
  \[
  K=\bigcap_{k\geq 1}  E_k.
  \]
  Define a mass distribution $\nu$ on $K$ by assigning a mass of 
  $1/(m_1 \times \ldots \times m_{k})$
  to each of the $m_1 \times \ldots \times m_{k}$ many  $k$-level intervals.
  Let $S$ be a subinterval of $[0,1]$. For a lighter notation we write $2\epsilon$ to denote 
  the length $|S|$ of $S$.
  We estimate $\nu(S)$.
  Let $k$ be the integer such that 
  $g_k\leq 2 \epsilon < g_{k-1}$.
  The number of $k$-level intervals that intersect $S$ is 
  \begin{itemize}
  \item at most $m_{k}$, since $S$ intersects at most one $(k-1)$-level interval.
  \item at most $1+ 2\epsilon/g_{k} \leq 4\epsilon/g_{k}$ since the $k$-level intervals have gaps of at least $g_k$ between them.
  \end{itemize}
  Each $k$-level interval has measure $1/(m_1\times\ldots\times m_k)$ so that 
  \[
  \nu(S)\ \leq \ \frac{\min(4\epsilon/g_k,m_k)}{m_1\times\ldots\times m_k}  \ \leq \
  \frac{ (4 \epsilon/g_k)^{ s } \  m_{k}^{1-s}}{m_1\times\ldots\times m_k}, 
  \]
  for every $s$ between $0$ and $1$. 
  Hence,
  \[
  \nu(S)\ \leq\ \frac{2^s}{(m_1\times \ldots\times m_{k-1}) \ m_k^s g_k^s} (2\epsilon)^s.
  \]
  Thus, $\nu(S)$ is at most 
  \begin{align*}
    &
    1
    \frac
    { M_{1}^{b_2} \log M_2  }
    {c M_2^2   }\ \
    \frac{M_2^{b_{3} }\log M_{3}  }
    {c M_{3}^2   }
    \ldots 
    \frac
    { M_{k-2}^{b_{k-1}} \log M_{k-1}  }
    {c M_{k-1}^2   }
    \ \
    \frac{2^s}{m_k^s g_k^s} \ (2\epsilon)^s  =
    \\
    &
    \frac
    { M_{1}^{b_2} \log M_2  }
    {c M_2^2   }\ \
    \frac{ M_2^{b_{3} }\log M_{3}  }
    {c M_{3}^2   }
    \ldots 
    \frac
    { M_{k-2}^{b_{k-1}} \log M_{k-1}  }
    {c M_{k-1}^2   }
    \left(\frac{ M_{k-1}^{b_k} \log M_k  }{c M_k^2   }\right)^s
    ({8 M_k^2})^s
    2^s\
    (2\epsilon)^s  =
    \\
    & \big(\log M_2  \ldots  \log M_{k-1}\big)  
       \big(M_{1}^{b_2} M_{2}^{b_3-2}  \ldots M_{k-2}^{b_{k-1}-2}\big)
    ( \log M_k)^s
    (16 )^s
    c^{-k+2-s}  M_{k-1}^{b_ks-2 }  
    (2\epsilon)^s.  
  \end{align*}
  We want to verify that   
  for every $j$ and for every $s< 2/a_j$
  there is a $C$ such that
  $\nu(S) < C (2\epsilon)^s$. 
  It suffices to show that there is a $C$ such that for every $k$,
  \begin{equation}
\label{{*}}
\tag{*}
  \big(\log M_2  \ldots  \log M_{k-1}\big) 
  \big(M_{1}^{b_2} M_{2}^{b_3-2}  \ldots  M_{k-2}^{b_{k-1}-2}\big)
  ( \log M_k)^s (16 )^s \  c^{-k+2-s} < \ C \   M_{k-1}^{2-b_ks}.
\end{equation}
  Fix $k_0$ such that for every $k\geq k_0,$ $a(j+1,k)=a(j+1,k_0)$.  
  Thus, for every $k\geq k_0,$ $a(j+1,k)=a_{j+1}$.
  Then, define  $\delta>0$ as follows so that 
  for every $k\geq k_0$,   
  \[
  2-\left( b_k \frac{2}{a_j}\right) \geq 2- 2\left( a_{j+1} \frac{2}{a_j}\right)  \geq 2 - 2\frac{a_{j+1}}{a_{j}} = \delta.
  \]
  By the choice of $k_0$ and the definition of $b_k$, for all $k>k_0$, it holds that $b_k<a_{j+1}$.
  Hence the left hand side of the inequality \eqref{{*}}  is at most a constant multiple of
  \[
  \big(\log M_2 \ldots  \log M_{k-1}\big) 
   \big(M_{1}^{a_{j+1}} M_{2}^{a_{j+1}-2}  \ldots  M_{k-2}^{a_{j+1}-2}\big)
   (\log M_k)^s (16 )^s \  c^{-k+2-s}.
  \]
  Furthermore, there is a constant $C$ such that 
  \[
  \big(\log M_2  \ldots  \log M_{k-1}\big) 
  \big(M_{1}^{a_{j+1}} M_{2}^{a_{j+1}-2}  \ldots  M_{k-2}^{a_{j+1}-2}\big)
  (\log M_k)^s (16 )^s \  c^{-k+2-s} 
  < \ C \   M_{k-1}^{\delta}. 
  \]
  The above inequality follows by  noticing that $M_\ell = M_1^{\ell!}$ for $\ell \ge 1$, 
  taking logarithms on each side and recognizing that 
  the contribution of $M_{k-1}$ is the dominating term for sufficiently large $k$.
  The value of $C$ is determined by the value $k_0,$ which is computable in~$0'$ 
  as a function of $j$.
\end{proof}

\begin{proof}[Second proof of Theorem~\ref{1}.]
  Let $a$ be a real number right-computably enumerable in~$0'$ and greater than
  $2$ (for $a$ equal $2$, taking $x$ equals $\sqrt{2}$ suffices).  Fix a computable doubly-indexed sequence
  $(a(j,s))_{j,s\geq 0}$ of rational numbers 
  satisfying property (3) of Lemma~\ref{4}.
 That is, we assume that $\lim_{j\to\infty}
  \lim_{s\to\infty}a(j,s)=a$, for all $s$ the sequence $(a(j,s))_{j\geq 0}$ is strictly decreasing,
  for all $j\geq 0$ the sequence $(a(j,s))_{s\geq 0}$ is eventually constant, 
  for all $s$, we have $a(0,s)=a(0,0)$ and $a(1,s)=a(1,0)$.  
  The last condition gives an appropriate
  initialization of the construction.  Let $K$ be the fractal with measure $\nu$ and $C$ be the
  function associated with this approximation of $a$ in Lemma~\ref{7}.  Fix a computable function
  $C(r,s): \Q\times\N\to \Q$ such that for every $r$ in $\Q$,  $(C(r,s))_{s \ge 0}$ is eventually
  equal to $C(r)$.  We
  may also assume that for all $s$, $C(a(1,s),s)=C(a(1,0),0)$.
  

  We compute a real number $x$ in $K$.  By recursion on $s$ we construct a sequence of nested
  intervals $(I(s))_{s\geq 0}$ such that if $I(s)$ is different from $I(s-1)$ then $I(s)$ is an
  element of the $s$-level of $K$.  We define an auxiliary function $\ell(s)$, with infinite limit,
  to approximate the convergence of the sequence $a(j,s)$.  We also define an auxiliary
  integer-valued function $q(j,s)$ where $j$ is an integer in $[0,\ell(s))$, with the intention that
  $x$ avoids approximation by rational numbers with denominator $q$ greater than or equal to
  $q(j,s)$ within $1/q^{a(j,s)}$.  
  This intention will be realized  in the construction at step $s$ onwards provided that 
  at every  step $t\geq s$, $\ell(t)$ is greater than $j$; in particular, 
  provided  that $a(j,s)$  and $C(a(j,s),s)$ have reached their limit values relative to~$s$.

  We will employ the following estimate.  For a natural number $q_0$ and a real number~$b$ greater
  than or equal to~$2$, let
  \[
  V(q_0,b)= \bigcup_{q\geq q_0} \left\{x\in \left(\frac{1}{q^{b}}, 1-\frac{1}{q^{b}}\right): \exists p\in\Z,
    \left|\frac{p}{q} - x\right|< \frac{1}{q^b}\right\}.
  \]
  Suppose that $b_1>b_2>a$.  By Lemma~\ref{7}, we can estimate $\nu(V(q_0,b_1))$ by
  \begin{align*}
    \nu\big(V(q_0,b_1)\big)&\leq \sum_{q\geq q_0}\sum_{0<p<q} C\big({2}/{b_2}\big)\Big(\frac{2}{q^{b_1}}\Big)^{2/b_2}\\
    &\leq 2 C\big({2}/{b_2}\big)\sum_{q\geq q_0} q \Big(\frac{1}{q^{b_1}}\Big)^{2/b_2}\\
    &\leq 2 C\big({2}/{b_2}\big)\sum_{q\geq q_0} \frac{1}{q^{2 b_1/b_2-1}}.
  \end{align*}
  Thus, for any $\epsilon>0$ there is a $q_0$, uniformly computable from $\epsilon$, $b_1,$ $b_2$
  and $C(2/b_2)$, such that $\nu(V(q_0,b_1))$ is less than $\epsilon$.
  \begin{enumerate}
  \item[] {\em Initial step $0$}. Start with  $I(0)$ equal to the unit interval and $\ell(0)=0.$

  \item[] {\em Step $s$, greater than $0$}. 
    Let $\ell(s)$ be the least $j$  less than or equal to $s$  such  that 
    \[
    a(j+1,s-1)\neq a(j+1, s) \text{ or } C\big({2}/{a(j+1,s)}, s-1\big) \neq C\big({2}/{a(j+1,s)}, s\big)
    \]
    if such exists; otherwise, let $\ell(s)$ be $s$.  By our assumptions on $a(j,s)$ and
    $C(a(1,0),s),$ for every $s>0$, we have that $\ell(s)\geq 1.$

    Let $m(s)$ be the $\nu$-measure given to a level-$s$ interval in~$K$.  We find $h(s)$ so that the
    following inequality holds for each $j$ such that $0\leq j<\ell(s)$,
    \[
    2 C\big({2}/{a(j,s)}, s\big) \sum_{q\geq h(s)} {1}/{q^{\frac{2 a(j,s)}{a(j+1,s)}-1}}<\frac{1}{s}\ \frac{m(s)}{ 2^{s}}.
    \]
    We define $q(j,s)$ for each $j\in [0,\ell(s))$ as follows: if $q(j,s-1)$ is defined then let $q(j,s)=q(j,s-1)$; otherwise, let $q(j,s)=h(s)$.
  
    Let $I(s)$ be  the leftmost level-$s$ interval in $K$  that is included in $I(s-1)$ and satisfies
    \[
    \nu\Big( I(s)\cap \bigcup_{0\leq j< \ell(s)}  
    V\big( q(j, s) , a(j,s)\big)\setminus  V\big(h(s) , a(j,s)\big) \Big)  <  m(s) - 2 \frac{m(s) }{2^{s}} 
    \]
    if such exists; otherwise, let $I(s)$ be $I(s-1)$.
    Note that $m(s) \le \nu (I(s))$. 
  \end{enumerate}

  We now verify that the construction works. Define $\ellmin(s)=\min_{t\geq s}\ell(t)$.  We show by
  induction on $s$ that 
  \[
  \nu\Big(I(s)\cap\bigcup_{0\leq j<\ellmin(s)}V\big(q(j,s),a(j,s)\big)\Big) \leq \nu(I(s))\Big(1-\frac{1}{2^s}\Big).
  \]
  Since $\ellmin(0)=0$, the inductive claim holds for $s=0$.
  Assume the inductive claim for $s-1$:
  \[
  \nu\Big(I(s-1)\cap\bigcup_{0\leq j<\ellmin(s-1)}V\big(q(j,s-1),a(j,s-1)\big)\Big) \leq\nu\big(I(s-1)\big)\Big(1-\frac{1}{2^{s-1}}\Big).
  \]
  Consider those integers $j$ such that $j<\ellmin(s)$.  By the definition of $\ellmin$, we have
  $a(j,s)=\lim_{t\to\infty}a(j,t)$ and $C(2/a(j,s),s)=\lim_{t\to\infty}C(2/a(j,s),t)=C(2/a(j,s)).$
  Further, by the discussion above, 
  \[
  \nu\Big(V\big(h(s),a(j,s)\big)\Big)\leq 2 C\big({2}/{a(j,s)}, s\big) \sum_{q\geq h(s)} 1/q^{\frac{2a(j,s)}{a(j+1,s)}-1}.
  \]
  In the construction we choose $h(s)$ so that for each $j$ less than $\ell(s)$, the term
  on the right side of this inequality is less than $m(s)/(s 2^s)$. 
  This  ensures that for each $j$ less than $\ellmin(s)$,
  the same  upper bound holds for $\nu(V(h(s),a(j,s)))$.

  Now, consider the action of the construction during step $s$.  
  If $I(s)$ is equal to $I(s-1)$, then
  \begin{align*}
   & I(s)\cap\bigcup_{0\leq j<\ellmin(s)}V\big(q(j,s),a(j,s)\big) =\\
    & \qquad \Big(I(s)\,\cap\bigcup_{0\leq j<\ellmin(s-1)}V\big(q(j,s-1),a(j,s-1)\big)\Big)\  \cup \\
    & \qquad \Big(I(s)\,\cap\bigcup_{\ellmin(s-1)\leq j<\ellmin(s)}V\big(q(j,s),a(j,s)\big)\Big).
  \end{align*}
  The first component of the union has $\nu$-measure at most $\nu(I(s))(1-1/2^{s-1})$ and the second
  component has $\nu$-measure at most $m(s)/2^s$.  The union has measure at most
  $\nu(I(s))(1-1/2^{s})$, as required.

  Otherwise, $I(s)$ is a proper subinterval of $I(s-1)$ and satisfies
  \[
  \nu\Big( I(s)\cap \bigcup_{0\leq j< \ell(s)} V\big( q(j, s) , a(j,s)\big)\setminus V\big(h(s),
  a(j,s)\big) \Big) < m(s) - 2 \frac{m(s) }{2^{s}}.
  \]
  Then, 
  \begin{align*}
    &I(s)\cap\bigcup_{0\leq j<\ellmin(s)} V\big(q(j,s),a(j,s)\big)=\\
    &\qquad \Big(I(s)\cap \bigcup_{0\leq j< \ellmin(s)} V\big( q(j, s) , a(j,s)\big)\setminus V\big(h(s) ,
    a(j,s)\big) \Big)\ \  \cup\\
    &\qquad \Big(I(s)\,\cap\bigcup_{0\leq j<\ellmin(s)}V\big(h(s),a(j,s)\big)\Big).
  \end{align*}
  The $\nu$-measure of the first component of the union is less than 
\[
m(s) - 2 \frac{m(s) }{2^{s}}  =\nu\big(I(s)\big)\Big(1 - \frac{1}{2^{s-1}}\Big).
\]  
As in the previous case, the $\nu$-measure of the second component is
  less than $\nu(I(s))/2^s$.  Again, the union has measure at most $\nu(I(s))(1-1/2^{s})$, as required.
  
  It remains to show that there are infinitely many $s$ such that $I(s)$ is a proper subinterval of
  $I(s-1)$.  Consider an $s$ such that $\ell(s)$ is equal to $\ellmin(s)$.  Since 
  \[
  \nu\Big(I(s-1)\cap\bigcup_{0\leq j<\ellmin(s-1)}V\big(q(j,s-1),a(j,s-1)\big)\Big)<\nu\big(I(s-1)\big)\Big(1 - \frac{1}{2^{s-1}}\Big), 
  \]
  we may fix an $s$-level subinterval $I$ of $I(s-1)$ such that 
  \[
  \nu\,\Big(I\,\cap\,\bigcup_{0\leq j<\ellmin(s-1)}V\big(q(j,s-1),a(j,s-1)\big)\Big)< \nu(I)\Big(1 - \frac{1}{2^{s-1}}\Big).
  \]
  For this $I$, 
  \begin{align*}
    &I\cap \bigcup_{0\leq j< \ell(s)}V\big( q(j, s), a(j,s)\big)\setminus  V\big(h(s),
    a(j,s)\big) \subseteq  \\
    &\qquad \Big(I\cap \bigcup_{0\leq j< \ellmin(s-1)} V\big( q(j, s), a(j,s)\big)\Big) \ \cup \\
    &\qquad \Big(I\cap \bigcup_{\ellmin(s-1)\leq j< \ell(s)} V\big( q(j, s), a(j,s)\big)\setminus  V\big(h(s), a(j,s)\big) \Big).
  \end{align*}
  For each $j$ such that $\ell(s-1)\leq j< \ell(s)$, $q(j,s)$ is equal to $h(s)$, so the second
  component of the union is empty.  Thus,
  \begin{align*}
  \nu\Big( I\cap \bigcup_{0\leq j< \ell(s)}  
  V\big( q(j, s) , a(j,s)\big)\setminus  V\big(h(s) , a(j,s)\big) \Big)  &<\  \nu(I)\Big(1 - \frac{1}{2^{s-1}}\Big) 
\\& = \  m(s) - 2 \frac{m(s) }{2^{s}}. 
  \end{align*}
  Hence, the conditions for the construction to define $I(s)$ to be a proper subinterval of $I(s-1)$
  are satisfied, as required.

  Consider the sequence given by the closures of the intervals $I(s), {s\geq0}$.  This is a
  computable nested sequence of intervals whose lengths approach zero in the limit.  Let $x$ be the
  unique real number in their intersection.  By construction, $x$ is computable (as is its base-$b$
  expansion, for every integer $b$ greater than or equal to $2$.)

  We now prove that the irrationality exponent of $x$ is equal to~$a$.  For each $j\geq 0$, let $b_j=\lim_{s\to
    \infty} a(j,s)$. The sequence $(b_j)_{j\geq 0}$ is strictly decreasing with limit~$a$.  The
  construction ensures that for every $j$, there is a step $s$ such that $I(s)$ is a level-$s$
  interval of $K$ containing real numbers that have at least one rational approximation $p/q$ within
  $1/q^{b_j}$.  Thus, the real number $x$ has irrationality exponent greater than or equal to $a$.  We
  now show it can not be greater than $a$.  Suppose that $b$ is greater than $a$.  Let $j$ be such
  that $b$ is greater than $b_j$ and let $s$ be such that $\ellmin(s)$ is greater than $j$.  Then, for
  all $t>s$, $a(j,t)=a(j,s)=b_j$ and $q(j,t)=q(j,s)$.  Further, for any $t>s$,
  $\nu(I(t)\setminus V(q(j,t),b_j))$ is positive.  If there were an integer $q>q(j,s)$ and an integer $p$ such
  that 
\[
\left|x-\frac{p}{q}\right|<\frac{1}{q^b},
\] 
then there would be a $t$ greater than $s$ such that 
\[
I(t)\subset
\Big  (\frac{p}{q}-\frac{1}{q^{b_j}},\frac{p}{q}+\frac{1}{q^{b_j}}\Big).
\]  
But then $I(t)\setminus V(q(j,t),b_j)$ would be empty, a
  contradiction with the fact that it has positive measure.
\end{proof}
\bigskip
\bigskip

\noindent
{\bf Acknowledgements.}
The  authors worked on this problem while they were visiting the Institute for Mathematical Sciences, National University of Singapore, in 2014.  V. Becher is a member of 
Laboratoire International Associ\'e INFINIS, Universit\'e Paris Diderot CNRS - Universidad de Buenos
Aires-CONICET.  Slaman's research was partially supported by the National Science Foundation under grant number DMS-1301659.

\bibliography{ie}
\end{document}